\numberwithin{equation}{section}
\numberwithin{figure}{section}
\theoremstyle{plain}
\newtheorem{thm}{\protect\theoremname}[section]
\theoremstyle{plain}
\newtheorem{prop}[thm]{\protect\propositionname}
\theoremstyle{plain}
\newtheorem{lem}[thm]{\protect\lemmaname}
\theoremstyle{remark}
\newtheorem{rem}[thm]{\protect\remarkname}
\newcommand{\stodd}{\reflectbox{$\ddots$}}
\author{Nozomi Ito}
\providecommand{\lemmaname}{Lemma}
\providecommand{\propositionname}{Proposition}
\providecommand{\remarkname}{Remark}
\providecommand{\theoremname}{Theorem}
\begin{document}
\global\long\def\ad{\text{ad}}%
\global\long\def\abs{|\cdot|}%
\global\long\def\irr{\mathrm{Irr}}%
\global\long\def\spec{\mathrm{Spec}}%
\global\long\def\gll{\mathrm{GL}}%
\global\long\def\nat{\mathbb{\mathbb{Z}}_{>0}}%
\global\long\def\defi{\overset{\mathrm{def}}{\iff}}%
\global\long\def\map{\rightarrow}%
\global\long\def\ep{\varepsilon}%
\global\long\def\sll{\mathrm{SL}}%
\global\long\def\spp{\mathrm{Sp}}%
\global\long\def\diag{\mathrm{diag}}%
\global\long\def\tr{\mathrm{Tr}}%
\global\long\def\akk{\mathbb{A}_{k}}%
\global\long\def\aaa{\mathbb{A}}%
\global\long\def\aff{\mathbb{A}_{F}}%
\global\long\def\aee{\mathbb{A}_{E}}%
\global\long\def\akk{\mathbb{A}_{k}}%
\global\long\def\zz{\mathbb{Z}}%
\global\long\def\rr{\mathbb{R}}%
\global\long\def\qq{\mathbb{Q}}%
\global\long\def\cc{\mathbb{C}}%
\global\long\def\inj{\hookrightarrow}%
\global\long\def\surj{\twoheadrightarrow}%
\global\long\def\oo{\infty}%
\global\long\def\bu{\mathrm{U}}%
\global\long\def\sgn{\mathrm{\mathrm{sgn}}}%
\global\long\def\ind{\mathrm{\mathrm{Ind}}}%
\global\long\def\vol{\mathrm{\mathrm{vol}}}%
\global\long\def\aqq{\mathbb{A}_{\qq}}%
\global\long\def\nami{\rightsquigarrow}%
\global\long\def\lig{\mathfrak{g}}%
\global\long\def\lik{\mathfrak{k}}%
\global\long\def\lip{\mathfrak{p}}%
\global\long\def\lit{\mathfrak{t}}%
\global\long\def\liu{\mathfrak{u}}%
\global\long\def\lih{\mathfrak{h}}%
\global\long\def\liz{\mathfrak{Z}}%
\global\long\def\rint{\mathcal{O}}%
\global\long\def\soo{\mathrm{SO}}%
\global\long\def\act{\curvearrowright}%
\global\long\def\tran{^{t}}%

\title{The square-integrability of double descent}
\begin{abstract}
Double descent is a method to construct automorphic representations
of classical groups. For given A-parameter $\psi$ with certain good
properties, double descent constructs a space of functions orthogonal
to any cuspidal representation whose A-parameter is not $\psi$ and
not orthogonal to any cuspidal representation with A-parameter $\psi$.
In this paper, we show that functions constructed by double descent
are always square-integrable.
\end{abstract}

\maketitle

\section{Introduction\label{sec:Introduction}}

In \cite{zbMATH07485546}, Ginzburg and Soudry introduced double descent.
Double descent is a method to construct any irreducible cuspidal automorphic
representations of classical groups with given A-parameter with certain
good properties (we do not consider double descent for metaplectic
groups in this paper). For the case when the given A-parameter is
generic, double descent is completely established. However, for the
case when the given A-parameter is non-generic, it is incomplete due
to a certain difficulty. The purpose of this paper is to resolve the
difficulty.

Throughout this paper, let $F$ be a number field, and $G$ a symplectic
or special orthogonal group over $F$. We denote by $\aff$ the ring
of adeles of $F$ and by $F_{v}$ the completion of $F$ at any place
$v$ of $F$. Fix an appropriate maximal compact subgroup $K=\prod_{v}K_{v}$
of $G(\aff)$. Put $\lig={\rm Lie}(G(\rr\otimes_{\qq}F))\otimes_{\rr}\cc$
and $Z(\lig)={\rm Center}(U(\lig))$.

\subsection{Double descent}

For the sake of explanation, let us recall a special case of double
descent for odd special orthogonal group. Let $F$ be a number field.
For any $d\in\zz_{\geq0}$, denote by ${\rm SO}_{d}$ the split special
orthogonal group of rank $[d/2]$ over $F$. Let $m$ be an integer
and $\tau=\otimes_{v}\tau_{v}$ a cuspidal automorphic representation
of $\gll_{m}(\aff)$. For any $N\in\zz_{>0}$, we denote by $\Delta(\tau,N)=\otimes_{v}\Delta(\tau_{v},N)$
the isobaric automorphic representation 
\[
\tau|\det|_{\aff}^{(N-1)/2}\boxplus\tau|\det|_{\aff}^{(N-3)/2}\boxplus\dots\boxplus\tau|\det|_{\aff}^{-(N-1)/2}
\]
 of $\gll_{mN}(\aff)$ for short. Let $n$ be an odd integer and we
temporary put $G=\soo_{nm+1}$. Assume that $L(s,\tau,\wedge^{2})$
has a pole at $s=1$ (note that $m$ is automatically even). Put $\psi=\tau[n]$,
which is an elliptic A-parameter of $G$ (see $\S$\ref{sec:Preliminary}).

Put $H=\soo_{2m(nm+1)}$. Let $Q=MU$ be a parabolic subgroup of $H$
with Levi factor $M$ and unipotent radical $U$ such that $M\simeq(\gll_{nm+1})^{m-1}\times\soo_{2(nm+1)}$.
Let $\psi_{U}$ be a certain character of $U(F)\backslash U(\aff)$
such that the subgroup of $M(\aff)$ consists of elements stabilizing
$\psi_{U}$ is isomorphic to $G(\aff)\times G(\aff)$. For any measurable
and locally $L^{1}$ function $\phi$ on $U(F)\backslash H(\aff)$,
we define the Fourier coefficient $\mathcal{F}_{\psi_{U}}(\phi)$
of $\phi$ with respect to $\psi_{U}$ by
\[
\mathcal{F}_{\psi_{U}}(\phi)(g,h)=\int_{U(F)\backslash U(\aff)}\phi(u(g,h))\psi_{U}(u)du,
\]
where $g,h\in G(\aff)$ and $(g,h)$ is the image of $(g,h)\in G(\aff)\times G(\aff)$
in the stabilizer of $\psi_{U}$. Note that $\mathcal{F}_{\psi_{U}}(\phi)$
is a function on $(G(F)\times G(F))\backslash(G(\aff)\times G(\aff))$.
Let $P$ be the Siegel parabolic subgroup. For a standard section
$f_{s}$ of ${\rm Ind}_{P(\aff)}^{H(\aff)}\Delta(\tau,mn+1)|\det|_{\aff}^{s}$,
where ${\rm Ind}$ is normalized parabolic induction, let $E(f_{s})$
be the Einstein series associated with $f_{s}$. Note that $E(f_{s})$
admits meromorphic continuation and has a simple pole at $s=n/2$.
We put

\[
\mathcal{DD}=\{\mathcal{F}_{\psi_{U}}{\rm Res}_{s=n/2}(E(f_{s}))\ |\ f_{s}:\mbox{standard section of }{\rm Ind}_{P(\aff)}^{H(\aff)}\Delta(\tau,mn+1)|\det|_{\aff}^{s}\}.
\]
Ginzburg and Soudry proved the following theorem.
\begin{thm}[\cite{zbMATH07485546}]
\label{thm:}
\begin{enumerate}
\item For any nonzero function $\mathcal{E}$ in $\mathcal{DD}$, the $G(F_{v})\times G(F_{v})$-representation
generated by $\mathcal{E}$ is equivalent to 
\[
\pi_{\psi_{v}}\boxtimes\pi_{\psi_{v}}^{\vee}
\]
for almost all finite place $v$ of $F$, where $\pi_{\psi_{v}}$
is the unramified irreducible representation of $G(F_{v})$ corresponding
to $\Delta(\tau_{v},n)$ under unramified local Langlands' correspondence.
\item Let $\pi_{1},\pi_{2}$ be irreducible cuspidal automorphic representations
of $G(\aff)$. The trilinear form
\[
L_{\pi_{1},\pi_{2}}:(\mathcal{E},f_{1},f_{2})\mapsto\int_{[G]}\mathcal{E}(g,h)f_{1}(g)f_{2}(h)dgdh,\ \mathcal{E}\in\mathcal{DD},\ f_{i}\in\pi_{i}
\]
is nonzero if $\overline{\pi_{2}}=\pi_{1}$ and the A-parameter of
$\pi_{1}$ (and $\pi_{2}$) is $\psi$ (by (1), the converse also
holds).
\item If $n=1$, then 
\[
\mathcal{DD}=\bigoplus_{\pi}\pi\boxtimes\overline{\pi},
\]
where the summation runs over all $\pi$ with A-parameter $\psi(=\tau[1])$.
\end{enumerate}
\end{thm}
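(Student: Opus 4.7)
I would prove the three parts in sequence, with (1) providing the crucial unramified local input, (2) splitting into an easy direction via (1) and a hard direction via global unfolding, and (3) following from (1) and (2) together with rigidity.

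For (1), the computation is local at an unramified finite place $v$. Choose $f_{s,v}$ to be the spherical section; then the $H(F_v)$-representation generated by $\mathrm{Res}_{s=n/2} f_{s,v}$ is the spherical Langlands quotient of $\mathrm{Ind}_{P(F_v)}^{H(F_v)} \Delta(\tau_v, mn+1)|\det|_{F_v}^{n/2}$. The Fourier coefficient $\mathcal{F}_{\psi_U}$ corresponds locally to the twisted Jacquet functor along $(U, \psi_U)$. A direct computation on the spherical vector (of Casselman--Shalika type) identifies the twisted Jacquet module as the $G(F_v)\times G(F_v)$-module $\pi_{\psi_v}\boxtimes \pi_{\psi_v}^{\vee}$; the Satake parameters match because, by definition, $\pi_{\psi_v}$ is the unramified representation of $G(F_v)$ attached to $\Delta(\tau_v, n)$.

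For (2), the ``only if'' direction is immediate from (1): a nonzero $L_{\pi_1,\pi_2}$ yields a nontrivial $(G(\aff)\times G(\aff))$-equivariant map $\mathcal{DD}\to \pi_1\boxtimes\pi_2$, and comparing Satake parameters at almost all places forces both $\pi_1,\pi_2$ to have A-parameter $\psi$ with $\pi_2\simeq \overline{\pi_1}$. For the converse I would unfold globally: substitute the integral definition of $\mathcal{F}_{\psi_U}$ into $L_{\pi_1,\pi_2}$ and swap orders of integration, reinterpreting the expression as a period of $\mathrm{Res}_{s=n/2} E(f_s)$ against a function on $H(F)\backslash H(\aff)$ built from $f_1$, $f_2$ and $\psi_U$. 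A Bruhat-cell unfolding reduces this to an Eulerian product whose local factors are recognized as a Rankin--Selberg-type $L$-function (such as $L(s,\pi_1\times\tau)$) at a critical point; under the assumption that $\pi_1$ has A-parameter $\tau[n]$, together with the pole of $L(s,\tau,\wedge^{2})$ at $s=1$, this value is nonzero.

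For (3), when $n=1$ the parameter $\psi=\tau$ is generic, the A-packet coincides with the L-packet, and cuspidal $\pi$ with A-parameter $\tau$ are pinned down by their Satake parameters. Part (1) forces every irreducible constituent of $\mathcal{DD}$ to be of the form $\pi\boxtimes\overline{\pi}$ for some such $\pi$, while part (2) guarantees every such $\pi\boxtimes\overline{\pi}$ actually occurs. Since at $n=1$ the descent lands in (unitary) cuspidal forms on $G(\aff)\times G(\aff)$, the space $\mathcal{DD}$ is semisimple, and strong multiplicity one distinguishes the isotypic components, upgrading ``occurs'' to ``direct summand''.

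\textbf{Main obstacle.} The principal difficulty is the unfolding in (2): locating the correct open $(U\cdot(G\times G))$-orbit in $P\backslash H$ with $H=\soo_{2m(nm+1)}$; controlling convergence since one is manipulating a residue of an Eisenstein series (so one expects to regularize or first work in a region of convergence and continue meromorphically); and executing the local computation with enough precision to recognize the resulting $L$-function and certify its nonvanishing at the critical point. The Bruhat combinatorics in a group as large as $\soo_{2m(nm+1)}$ make this the most labor-intensive step.
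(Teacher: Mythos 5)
Your sketches of (1) and (2) match the paper's description of the Ginzburg--Soudry proof: (1) is the twisted Jacquet-module computation of the spherical constituent of ${\rm Ind}_{P(F_v)}^{H(F_v)}\Delta(\tau_v,mn+1)|\det|_{F_v}^{n/2}$ at almost all places, and (2) does go through a Rankin--Selberg-type unfolding — though the paper attributes it specifically to the theory of twisted doubling (\cite{MR3989257}), which supplies precisely the global identity and local nonvanishing you gesture at. So far, so good.

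Your argument for (3), however, has a genuine gap, and it is exactly the gap this paper was written to close. You write ``Since at $n=1$ the descent lands in (unitary) cuspidal forms on $G(\mathbb{A}_F)\times G(\mathbb{A}_F)$, the space $\mathcal{DD}$ is semisimple\dots'' — but that cuspidality (or even just square-integrability) is not a consequence of (1) and (2); it is the key unproven claim. Part (1) only constrains the local components at almost all finite places, which does not rule out $\mathcal{E}\in\mathcal{DD}$ having non-vanishing constant terms, failing to be square-integrable, or generating a non-semisimple representation. Part (2) is an orthogonality statement against cuspidal representations and likewise cannot detect non-$L^2$ pieces. The actual Ginzburg--Soudry proof of (3) is, as the paper records, a ``very long calculation'' establishing directly that all nontrivial constant terms of every $\mathcal{E}\in\mathcal{DD}$ vanish when $n=1$; only then does one get cuspidality, hence a semisimple decomposition, hence $\mathcal{DD}=\bigoplus_\pi \pi\boxtimes\overline{\pi}$ by combining with (1) and (2). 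Your appeal to strong multiplicity one and rigidity is fine once that cuspidality is in hand, but invoking it at the start makes the argument circular. (This is also why the present paper exists: the constant-term computation does not extend to $n>1$, and the author replaces it with a square-integrability criterion argument.)
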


The proofs of (1) and (2) are relatively easy. (1) follows from the
computation of the twisted Jacquet module (with respect to $(\psi_{U})_{v}$)
of the unramified constituent of ${\rm Ind}_{P(F_{v})}^{H(F_{v})}\Delta(\tau,mn+1)|\det|_{F_{v}}^{n/2}$
at almost all place $v$ of $F$. (2) follows from the theory of twisted
doubling \cite{MR3989257}. On the other hand, the proof of (3) is
quite hard. By very long calculation, Ginzburg and Soudry proved that
all nontrivial constant terms of any function in $\mathcal{DD}$ are
vanishing when $n=1$. However, their strategy did not work sufficiently
for $n>1$.

\subsection{The main theorem}

The goal of this paper is to extend Theorem \ref{thm:} (3) for general
$n$ (and general $G$). For this goal, we show

\[
\mathcal{DD}\subset L^{2}(G(F)\times G(F)\backslash G(\aff)\times G(\aff)).
\]
If we can show it, we can immediately conclude that
\[
\mathcal{DD}\supset\bigoplus_{\pi}\pi\boxtimes\overline{\pi},
\]
where the summation runs over all \emph{cuspidal} $\pi$ with A-parameter
$\psi$. (In this paper, we do not consider residual representations
appear in $\mathcal{DD}$. Wait for twisted doubling to be extended
to non-cuspidal automorphic forms.)

From now on, $G$ is a general classical group again. The main result
of this paper is the following.
\begin{thm}
\label{thm:Assume-that} Let $\psi$ be an elliptic A-parameter of
$G$ and $\varphi$ a $\cc$-valued, smooth, and $K$-finite function
on $G(F)\backslash G(\aff)$. Assume that the $G(F_{v})$-representation
$\pi_{v}$ generated by $\varphi$ is irreducible and equal to $\pi_{\psi_{v}}$
(see $\S$\ref{sec:Preliminary}) for almost all finite places of
$F$. Moreover, assume that $\psi$ can be described as follows:
\[
\psi=\tau[n]\boxplus(\boxplus_{i=1}^{m}\tau_{i}[1]),
\]
where $\tau\neq\tau_{i}$ for any $i=1,\dots,k$ (it automatically
holds if $n$ is even). Then, $\varphi$ is square-integrable.
\end{thm}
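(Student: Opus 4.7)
The plan is to apply Langlands' square-integrability criterion: a smooth $K$-finite automorphic form on $G(F)\backslash G(\aff)$ lies in $L^{2}$ if and only if, for every standard proper parabolic $P=MU$, every cuspidal exponent of the constant term $\varphi_{P}$ (as a character of $A_{M}/A_{G}$ after $\delta_{P}^{1/2}$-normalization) has real part strictly in the open negative Weyl chamber relative to $P$. The task thus reduces to enumerating the possible exponents of $\varphi_{P}$ and verifying the inequalities.

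The hypothesis $\pi_{v}\simeq\pi_{\psi_{v}}$ at almost every finite $v$ pins down the Satake parameters of $\varphi$: they are those attached to the local Langlands parameter of $\psi=\tau[n]\boxplus(\boxplus_{i}\tau_{i}[1])$, namely $\tau_{v}|\cdot|_{v}^{(n-1)/2}\oplus\cdots\oplus\tau_{v}|\cdot|_{v}^{-(n-1)/2}\oplus\bigoplus_{i}\tau_{i,v}$. By the unramified Satake correspondence together with Langlands' decomposition of the space of automorphic forms by cuspidal data, this forces, up to Weyl conjugacy, the global cuspidal support of $\varphi$ to lie over the standard Levi $M_{0}\simeq(\gll_{d})^{n}\times\prod_{i}\gll_{d_{i}}$ (where $d,d_{i}$ are the degrees of $\tau,\tau_{i}$) with cuspidal datum $\sigma_{0}:=\tau|\det|^{(n-1)/2}\otimes\cdots\otimes\tau|\det|^{-(n-1)/2}\otimes\bigotimes_{i}\tau_{i}$.

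For each standard $P=MU$, Langlands' constant term formula then produces the possible exponents of $\varphi_{P}$ by regrouping the entries of $\sigma_{0}$ into irreducible ``segments'' matching the GL-factors of $M$. The hypothesis $\tau\neq\tau_{i}$ is decisive: no single GL-block of $M$ may mix a $\tau_{i}$ with a shifted copy of $\tau$, so every GL-factor $\gll_{a}$ of $M$ is either (i) a contiguous segment $\{\tau|\det|^{b},\dots,\tau|\det|^{b-k+1}\}$ of the $\tau[n]$-chain with $a=dk$, contributing central exponent $|\det|^{(2b-k+1)/2}$, or (ii) a single $\tau_{i}$, contributing trivial exponent.

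It remains to check strict negativity block by block after $\delta_{P}^{1/2}$-normalization. For a type-(i) block $\gll_{dk}$ sitting in a maximal parabolic of $G$, a direct computation with $\delta_{P}$ converts the Langlands inequality into a bound of the shape $2b+(d-1)k+c<n_{A}$ (with $c\in\{1,2\}$ depending on whether $G$ is symplectic/odd orthogonal or even orthogonal, and $n_{A}=\dim\psi=dn+\sum_{i}d_{i}$). The bounds $2b\le n-1$ and $k\le n$, together with the rank constraint from embedding $\gll_{dk}$ in $G$, make this strict whenever either (a) at least one $\tau_{i}[1]$ is present, or (b) the GL-block is a strict subsegment of $\tau[n]$, which for $d\ge 2$ is forced by the Levi-fitting constraint $dk\le\mathrm{rank}(G)$. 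Type-(ii) blocks yield trivially negative normalized exponents. The main obstacle will be the uniform combinatorial bookkeeping across all standard parabolics and Weyl-group regroupings (together with the degenerate case $d=1$, $\boxplus_{i}\tau_{i}[1]$ empty, where $\varphi$ is one-dimensional and must be handled directly); the hypothesis $\tau\neq\tau_{i}$ is precisely what rules out the borderline case where a $\tau_{i}$ at shift $0$ could be absorbed into a $\tau$-segment and push the exponent onto the chamber boundary.
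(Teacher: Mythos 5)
There is a substantive gap at the very start of your argument: the theorem does \emph{not} assume $\varphi$ is an automorphic form. The only hypotheses are that $\varphi$ is smooth, $K$-finite, and generates $\pi_{\psi_{v}}$ at almost all finite $v$; neither $Z(\lig)$-finiteness nor uniform moderate growth is assumed. You invoke Langlands' square-integrability criterion, which is a statement about automorphic forms in the narrow sense, and therefore you must first prove that $\varphi$ is one. The paper devotes an entire section ($\S$\ref{sec:-finiteness}, Theorem \ref{thm:Any--valued,-smooth,}) to precisely this point, combining Averbuch's moderate-growth result with a new argument by induction on the number of nonvanishing constant terms to establish $Z(\lig)$-finiteness. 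Your proposal simply skips this.

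A second problem is the assertion that the local condition ``forces, up to Weyl conjugacy, the global cuspidal support of $\varphi$ to lie over the standard Levi $M_{0}\simeq(\gll_{d})^{n}\times\prod_{i}\gll_{d_{i}}$.'' This is false: if $\varphi$ is itself cuspidal (which is the expected generic outcome for $\mathcal{DD}$), its cuspidal support is $G$; if it is residual, the support sits over some intermediate Levi. Knowing $\pi_{v}\simeq\pi_{\psi_{v}}$ at almost all places constrains but does not determine the cuspidal support, and the exponents that Langlands' criterion requires you to check depend on the actual cuspidal data. Related to this, the claim that ``type-(ii) blocks yield trivially negative normalized exponents'' is suspect: in the Moeglin--Waldspurger formulation (which is already normalized), a $\tau_{i}$-block would sit at exponent $0$, which is on the boundary, not strictly negative. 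The correct mechanism, used in the paper, is that multiplicity one in $\psi$, self-duality of each $\tau_{i}$, and the hypothesis $\tau\neq\tau_{i}$ force \emph{every} $\mathrm{GL}$-block in the Langlands datum to carry a copy of $\tau$ (with exponent taken from $\{\pm(n-1)/2,\dots\}$) and \emph{no} $\tau_{i}$ at all, so these boundary blocks never arise.

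Finally, the route you propose for the exponent bound — direct combinatorial verification of $\delta_{P}^{1/2}$-normalized inequalities block by block across all standard parabolics — is genuinely different from the paper's. The paper instead introduces the invariant $d(\sigma_{v})$ and invokes Tadi\'{c}-type results (\cite{MR2767523}) to obtain the uniform bound $d(\sigma_{v})<1/2$ for local components of discrete automorphic representations at split unramified places (Proposition \ref{prop:Let--be-3}); a single commutation of the induction then yields a contradiction if any $R_{t}\geq0$. This avoids the delicate Weyl-group and rank-fitting bookkeeping that your sketch only gestures at (``the main obstacle will be the uniform combinatorial bookkeeping\dots''). Even if your combinatorics can be completed, the $d(-)$-based argument is both shorter and more robust, and it applies without case analysis on $d$ or on whether $\boxplus_{i}\tau_{i}[1]$ is empty.
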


This result guarantees the square-integrability of functions constructed
by double descent.

This paper is organized as follows. In $\S$\ref{sec:Preliminary},
we introduce some notations and recall some facts regarding the endoscopic
classification. In $\S$\ref{sec:Square-integrability}, we prove
Theorem \ref{thm:Assume-that}. We use Langlands' square-integrability
criterion to prove Theorem \ref{thm:Assume-that}. However, we do
not assume that $\varphi$ in Theorem \ref{thm:Assume-that} is an
automorphic form (in the narrow sense). Therefore, $\S$\ref{sec:-finiteness}
is devoted to showing that $\varphi$ is an automorphic form (Theorem
\ref{thm:Any--valued,-smooth,}).

\section{Preliminaries\label{sec:Preliminary}}

In this section, we introduce some additional notations and recall
some facts we use regarding the endoscopic classification.

\subsection{The group $G$}

For convenience, we fix a realization of $G$ as follows. Put $J_{m}=\begin{pmatrix} &  & 1\\
 & \stodd\\
1
\end{pmatrix}\in{\rm Mat}_{m\times m}$ for $m\in\zz_{\geq0}.$ Let $d_{0}\in\zz_{\geq0}$ and $H$ a symmetric
($G:$ special orthogonal group) or alternating ($G:$ symplectic
group) matrix in ${\rm Mat}_{d_{0}\times d_{0}}(F)$ such that the
form defined by $H$ is anisotropic. Note that $d_{0}=0$ and $H$
is the zero matrix if $G$ is a symplectic group. For $r\in\zz_{\geq0}$,
define $G_{H,r}$ by
\[
G_{H,r}(A)=\{g\in\gll_{d_{0}+2r}(A)\ |\ g\begin{pmatrix} &  & J_{r}\\
 & H\\
\ep J_{r}
\end{pmatrix}{}^{t}g=\begin{pmatrix} &  & J_{r}\\
 & H\\
\ep J_{r}
\end{pmatrix},\ \det g=1\}
\]
for any $F$-algebra $A$, where $\ep=-1$ if $G$ is a symplectic
group and $\ep=1$ if $G$ is a special orthogonal group. We always
assume that $G=G_{H,r}$ for some $H$ and $r$.

\subsection{Standard parabolic subgroups}

Let $r_{0}\in\zz_{\geq0},r_{i}\in\zz_{>0}\ (i=1,\dots,k)$ and put
$r=\sum_{i=0}^{k}r_{i},\ \mathbf{r}=(r_{0};r_{1}\dots,r_{k})$. We
denote by $M_{\mathbf{r}}$ the image of the following map
\[
\gll_{r_{1}}\times\dots\times\gll_{r_{k}}\times G_{H,r_{0}}\ni(g_{1},\dots,g_{k},g_{0})\mapsto\begin{pmatrix}g_{1}\\
 & \ddots\\
 &  & g_{k}\\
 &  &  & g_{0}\\
 &  &  &  & \ddots
\end{pmatrix}\in G_{H,r}
\]
and put
\[
U_{\mathbf{r}}=\{\begin{pmatrix}1_{r_{1}} & * & * & * & *\\
 & \ddots & * & * & *\\
 &  & 1_{r_{k}} & * & *\\
 &  &  & 1_{d_{0}+2r_{0}} & *\\
 &  &  &  & \ddots
\end{pmatrix}\in G_{H,r}\},\ P_{\mathbf{r}}=M_{\mathbf{r}}U_{\mathbf{r}}.
\]
When a parabolic subgroup $P$ of $G_{H,r}$ is equal to $P_{\mathbf{r}}$
for some $\{r_{i}\}_{i=0}^{k}$, we say that $P$ is a standard parabolic
subgroup.

\subsection{Parabolic induction}

Let $r_{0}\in\zz_{\geq0},r_{i}\in\zz_{>0}\ (i=1,\dots,k)$ and put
$r=\sum_{i=0}^{k}r_{i}$. Let $\tau_{i}=\otimes_{v}'\tau_{i,v}$ be
an irreducible cuspidal automorphic representation of $\gll_{r_{i}}(\aff)$
for $i=1,\dots,k$ and $\mu=\otimes_{v}'\mu_{v}$ an irreducible cuspidal
automorphic representation of $G_{H,r_{0}}(\aff)$. We denote by

\[
\tau_{1}\times\dots\times\tau_{k}\rtimes\mu
\]
the space of $\cc$-valued, smooth, and $K$-finite functions $f$
on $M_{\mathbf{r}}(F)U_{\mathbf{r}}(\aff)\backslash G_{H,r}(\aff)$
such that the function
\[
M(\aff)\ni m\mapsto f(mk)\in\cc
\]
is in $\rho_{\mathbf{r}}(\tau^{(1)}\boxtimes\dots\boxtimes\tau^{(k)}\boxtimes\mu)$
for any $k\in K$, where $\rho_{\mathbf{r}}$ is the square root of
the modulus character of $P_{\mathbf{r}}(\aff)$. Namely, $\tau_{1}\times\dots\times\tau_{k}\rtimes\mu$
is the normalized ($K$-finite) parabolically induced representation.
We also define the local counterpart in the similar manner. Then we
have
\[
\tau_{1}\times\dots\times\tau_{k}\rtimes\mu\simeq\otimes_{v}'(\tau_{1,v}\times\dots\times\tau_{k,v}\rtimes\mu_{v}).
\]

\subsection{Endoscopic classification}

We recall the endoscopic classification (see \cite{arthur,MR3338302,kmsw,MR4776199}
for more details). We first recall elliptic A-parameters of $G=G_{H,r}$.
We denote by $[N]$ the $N$-dimensional irreducible algebraic representation
of $\sll_{2}(\cc)$ for $N\in\zz_{>0}$. An elliptic A-parameter of
$G$ is the following formal (commutative) sum
\[
\psi=\boxplus_{i}\tau_{i}[N_{i}],
\]
where
\begin{itemize}
\item $\tau_{i}[N_{i}]=\tau_{i}\boxtimes[N_{i}]$ for short,
\item $N_{i}$ is a positive integer,
\item $\tau_{i}$ is an irreducible cuspidal automorphic representation
of $\gll_{n_{i}}(\aff)$ such that 
\[
\sum_{i}n_{i}N_{i}=N:=\begin{cases}
2r+1 & G:\mbox{symplectic group},\\
2r+2[\frac{d_{0}}{2}] & G:\mbox{special orthogonal group},
\end{cases}
\]
\item if $(-1)^{\kappa+N+N_{i}}=1$ (resp. $(-1)^{\kappa+N+N_{i}}=-1$)
for
\[
\kappa=\begin{cases}
1 & G:\mbox{even special orthogonal group},\\
0 & \text{otherwise},
\end{cases}
\]
then the symmetric square L-function $L(s,\tau_{i},{\rm Sym}^{2})$
(resp. the exterior square L-function $L(s,\tau_{i},\wedge^{2})$)
has pole at $s=1$,
\item if $i\neq j$ and $N_{i}=N_{j}$, then $\tau_{i}\neq\tau_{j}$, and
\item if $G$ is a symplectic group or even special orthogonal group, then
the central character $\chi_{\tau_{i}}$ of $\tau_{i}$ satisfies
that $\prod_{i}\chi_{\tau_{i}}^{N_{i}}=\begin{cases}
1 & G:\mbox{symplectic group,}\\
\chi_{F(\delta)/F} & G:\mbox{even special orthogonal group,}\ \det H=-\delta
\end{cases}$, where $\chi_{F(\sqrt{\delta})/F}$ is the quadratic (or trivial)
character of $\aff^{\times}/F^{\times}$ corresponding to $F(\sqrt{\delta})/F$.
\end{itemize}
For the above $\psi$, define the associated isobaric representation
$\phi_{\psi}$ of $\psi$ by
\[
\phi_{\psi}=\boxplus_{i}(\tau_{i}|\det|_{\aff}^{(N_{i}-1)/2}\boxplus\tau_{i}|\det|_{\aff}^{(N_{i}-3)/2}\boxplus\dots\boxplus\tau_{i}|\det|_{\aff}^{-(N_{i}-1)/2}),
\]
which is an irreducible representation of $\gll_{N}(\aff)$. If $(\phi_{\psi})_{v}$
is irreducible unramified representation, then we denote by $\pi_{\psi_{v}}$
the irreducible unramified representation of $G(F_{v})$ corresponding
to $(\phi_{\psi})_{v}$ under unramified local Langlands' correspondence.
Then a crude version of the endoscopic classification is as follows.
\begin{thm}
\label{thm:The-discrete-spectrum}The discrete spectrum $L_{{\rm disc}}^{2}(G(F)\backslash G(\aff))$
of $G(F)\backslash G(\aff)$ has the following decomposition
\[
L_{{\rm disc}}^{2}(G(F)\backslash G(\aff))=\hat{\bigoplus_{\psi}}L_{{\rm \psi}}^{2}(G(F)\backslash G(\aff)),
\]
where the summation runs over all elliptic A-parameters $\psi$ of
$G$ and $L_{{\rm \psi}}^{2}(G(F)\backslash G(\aff))$ is the sum
of all irreducible discrete automorphic representation $\pi=\otimes_{v}\pi_{v}$
of $G(\aff)$ such that $\pi_{v}$ is equivalent to $\pi_{\psi_{v}}$
for almost all places $v$ of $F$.
\end{thm}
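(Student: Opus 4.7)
The plan is to deduce this crude statement from the fine endoscopic classification of Arthur \cite{arthur} and its refinements \cite{MR3338302,kmsw,MR4776199}, combined with strong multiplicity one for isobaric representations of $\gll_{N}(\aff)$. First I would invoke the fine classification, which produces a Hilbert direct sum decomposition of $L_{{\rm disc}}^{2}(G(F)\backslash G(\aff))$ indexed by elliptic A-parameters, where each $\psi$-summand is cut out explicitly via the global A-packet $\Pi_{\psi}$ and Arthur's multiplicity formula on characters of the associated component group. In particular, every irreducible discrete automorphic representation belongs to exactly one such $\psi$-summand, and the remaining task is merely to identify these summands with the $L_{\psi}^{2}$ defined in the statement.

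For the inclusion of the fine $\psi$-summand into $L_{\psi}^{2}$: at almost every finite place $v$ the local parameter $\psi_{v}$ is unramified, $(\phi_{\psi})_{v}$ is an irreducible unramified representation of $\gll_{N}(F_{v})$, and the local A-packet $\Pi_{\psi_{v}}$ is known to consist solely of the unramified representation $\pi_{\psi_{v}}$ matching $(\phi_{\psi})_{v}$ under unramified local Langlands. Hence every $\pi=\otimes_{v}'\pi_{v}$ occurring in this summand satisfies $\pi_{v}\simeq\pi_{\psi_{v}}$ at almost every $v$, which places it in $L_{\psi}^{2}$.

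For the reverse inclusion, suppose $\pi\subset L_{\psi}^{2}$ is an irreducible constituent. By the fine classification it lies in some $\psi'$-summand, and then the previous paragraph forces $\pi_{v}\simeq\pi_{\psi'_{v}}$ at almost every $v$. Combined with the defining property $\pi_{v}\simeq\pi_{\psi_{v}}$ of $L_{\psi}^{2}$, unramified local Langlands then yields $(\phi_{\psi})_{v}\simeq(\phi_{\psi'})_{v}$ for almost all $v$. Jacquet--Shalika strong multiplicity one for isobaric sums of unitary cuspidal automorphic representations of $\gll$ upgrades this to $\phi_{\psi}\simeq\phi_{\psi'}$ as representations of $\gll_{N}(\aff)$, and the uniqueness of the decomposition of an isobaric representation into Speh constituents of the form $\tau_{i}|\det|_{\aff}^{(N_{i}-1)/2}\boxplus\dots\boxplus\tau_{i}|\det|_{\aff}^{-(N_{i}-1)/2}$ forces $\psi=\psi'$ as formal sums. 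This identifies the two decompositions.

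The main obstacle is conceptual rather than technical: the fine endoscopic classification being invoked is itself extraordinarily deep, but within the scope of the present paper it is imported as a black box. Given that input, the only nontrivial additional ingredient is the Jacquet--Shalika strong multiplicity one argument used to separate distinct $\psi$, which is standard. One minor point that has to be checked is that the multiplicity formula really produces \emph{some} occurrences for each $\psi$ appearing in the statement (so that no $\psi$-summand is spuriously empty), but this is covered by the very construction of the classification.
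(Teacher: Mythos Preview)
The paper does not prove this theorem at all: it is stated in the preliminaries as a known fact, with the references \cite{arthur,MR3338302,kmsw,MR4776199} given just before the statement and the phrase ``a crude version of the endoscopic classification'' making clear that it is being imported wholesale. Your sketch is a correct and standard way to extract this crude statement from the fine classification in those references, and is exactly the kind of argument the authors are implicitly pointing to by calling it a ``crude version''; in that sense your approach matches the paper's, the only difference being that you spell out the reduction while the paper simply cites. One small remark: your final worry about whether each $\psi$-summand is nonempty is unnecessary for the statement as written, since the decomposition is asserted as a Hilbert direct sum over all elliptic $\psi$ and nothing prevents some summands from being zero.
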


The following fact is important in the next section (recall that $\lig={\rm Lie}(G(\rr\otimes_{\qq}F))\otimes_{\rr}\cc$
and $Z(\lig)={\rm Center}(U(\lig))$).
\begin{prop}
\label{prop:Let--be-1}Let $\psi$ be an elliptic A-parameter of $G$.
Then, $L_{{\rm \psi}}^{2}(G(F)\backslash G(\aff))$ is decomposed
into finitely many $Z(\lig)$-eigensubspaces.
\end{prop}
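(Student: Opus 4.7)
The plan is to reduce the claim to a finiteness statement about the irreducible constituents of $L_{\psi}^{2}(G(F)\backslash G(\aff))$ and then appeal to the endoscopic classification to control these constituents at every place. Since each irreducible admissible representation of $G(\aff)$ acts on its smooth vectors via a single $Z(\lig)$-character (the infinitesimal character of its archimedean factor), it suffices to show that only finitely many isomorphism classes of irreducible summands appear in $L_{\psi}^{2}$.

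To this end, let $\pi=\otimes_{v}'\pi_{v}$ be any such summand. Invoking the full endoscopic classification (\cite{arthur}, \cite{MR3338302}, \cite{kmsw}, \cite{MR4776199}), which refines the crude statement of Theorem \ref{thm:The-discrete-spectrum}, one has $\pi_{v}\in\Pi_{\psi_{v}}$ at \emph{every} place $v$ of $F$, where $\Pi_{\psi_{v}}$ is the local Arthur packet and is a finite set. At every finite place $v$ where $\psi_{v}$ is unramified, $\Pi_{\psi_{v}}$ contains exactly one $K_{v}$-spherical member, namely $\pi_{\psi_{v}}$. Because $\pi$ is $K$-finite, $\pi_{v}$ is $K_{v}$-spherical for almost all $v$, so $\pi_{v}=\pi_{\psi_{v}}$ at all $v$ outside some fixed finite set $S$ of places depending only on $\psi$.

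Hence each irreducible summand $\pi$ is determined by the finite tuple $(\pi_{v})_{v\in S}$ drawn from the product of the finite sets $\Pi_{\psi_{v}}$. Only finitely many irreducible representations therefore appear in $L_{\psi}^{2}$, and their archimedean factors contribute only finitely many $Z(\lig)$-characters; the corresponding decomposition into generalized eigensubspaces is finite.

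The main obstacle is precisely the appeal to the full endoscopic classification at every place, rather than to the crude version in Theorem \ref{thm:The-discrete-spectrum} which controls only almost all finite places. The crude form alone gives no direct bound on the possible archimedean infinitesimal characters; the essential input is the archimedean A-packet structure (after Arthur, Moeglin, Moeglin--Renard), which guarantees both finiteness of each $\Pi_{\psi_{v}}$ at $v\mid\infty$ and the existence of well-defined infinitesimal characters on its members. Once these local inputs are granted, the global counting argument above is straightforward.
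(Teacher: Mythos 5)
Your proof is correct and rests on the same key input as the paper's one-line proof, namely the finiteness of the archimedean local A-packet $\Pi_{\psi_{v}}$ for $v\mid\infty$ together with the fact that every constituent $\pi$ of $L_{\psi}^{2}$ has $\pi_{v}\in\Pi_{\psi_{v}}$ at \emph{every} place (the full classification, not the crude Theorem \ref{thm:The-discrete-spectrum}). You take a slightly longer route than necessary: since $Z(\lig)$ sees only the archimedean component, one can stop after observing that only finitely many $\pi_{\infty}$ occur (those in $\Pi_{\psi_{\infty}}$), without also bounding the non-archimedean components at the places of $S$ or deducing global finiteness of constituents. That extra step is harmless and gives a genuinely stronger statement, but it is not needed for the proposition.

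Your closing remark correctly identifies the subtlety glossed over by the paper's terse "immediate consequence": the crude almost-everywhere statement of Theorem \ref{thm:The-discrete-spectrum} does not constrain $\pi_{\infty}$ at all, so the archimedean refinement of the endoscopic classification is indispensable.
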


This is an immediate consequence of the finiteness of an archimedean
local A-packet.

\section{$Z(\protect\lig)$-finiteness\label{sec:-finiteness}}

The aim of this section is to show the following.
\begin{thm}
\label{thm:Any--valued,-smooth,}Let $\varphi$ be a $\cc$-valued,
smooth, and $K$-finite function on $G(F)\backslash G(\aff)$. Assume
that $\varphi$ is $\otimes'_{v\in S}C_{c}^{\infty}(K_{v}\backslash G(F_{v})/K_{v})$-finite
for some cofinite subset $S$ of the set of finite places of $F$,
Then, $\varphi$ is $Z(\lig)$-finite.
\end{thm}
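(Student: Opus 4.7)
The plan is to decompose $\varphi$ according to its Hecke eigenvalues at places in $S$, then use the endoscopic classification together with Proposition~\ref{prop:Let--be-1} to constrain the possible $Z(\lig)$-infinitesimal characters. Specifically, set $\mathcal{H}_{S}:=\otimes'_{v\in S}C_{c}^{\infty}(K_{v}\backslash G(F_{v})/K_{v})$; this is a commutative algebra, and by hypothesis $\mathcal{H}_{S}\cdot\varphi$ is finite-dimensional. Decompose
\[
\mathcal{H}_{S}\cdot\varphi=\bigoplus_{\chi\in X}V_{\chi}
\]
into the finite direct sum of generalized joint $\chi$-eigenspaces, where $X$ is a finite set of characters of $\mathcal{H}_{S}$. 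Via the Satake isomorphism, each $\chi\in X$ corresponds to a family $\{\pi_{\chi,v}\}_{v\in S}$ of irreducible unramified representations of $G(F_{v})$. Since $Z(\lig)$ commutes with every element of $\mathcal{H}_{S}$, the image $Z(\lig)\cdot V_{\chi}$ still lies in the generalized $\chi$-eigenspace of $\mathcal{H}_{S}$ acting on $C^{\infty}(G(F)\backslash G(\aff))$, so it suffices to show each $Z(\lig)\cdot V_{\chi}$ is finite-dimensional.

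Strong multiplicity one for $\gll_{N}$ applied to the isobaric representations $\phi_{\psi}$ attached to elliptic A-parameters, combined with Theorem~\ref{thm:The-discrete-spectrum}, implies that an elliptic A-parameter $\psi$ is uniquely determined by the family $\{\pi_{\psi_{v}}\}_{v\in S}$ whenever $S$ is cofinite in the set of finite places. Hence each $\chi\in X$ picks out at most one candidate A-parameter $\psi_{\chi}$, and Proposition~\ref{prop:Let--be-1} then bounds the set of $Z(\lig)$-infinitesimal characters appearing in $L_{\psi_{\chi}}^{2}$ by a finite set $\Xi_{\chi}$.

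The main obstacle is translating this finiteness on the $L^{2}$-side into finiteness for $V_{\chi}$ itself, since $\varphi$ is assumed neither square-integrable nor of moderate growth, so Theorem~\ref{thm:The-discrete-spectrum} does not apply directly. My plan is to embed each $V_{\chi}$ into the larger space of automorphic forms decomposed by cuspidal supports (in the sense of Langlands and Franke), and then argue that only cuspidal supports giving rise to the unramified data $\{\pi_{\chi,v}\}_{v\in S}$, which themselves arise from the A-parameter $\psi_{\chi}$, can intervene. Once this is established, the ideal in $Z(\lig)$ annihilating every character in $\Xi_{\chi}$ has finite codimension and must annihilate $Z(\lig)\cdot V_{\chi}$; hence $Z(\lig)\cdot V_{\chi}$ is finite-dimensional, which yields the $Z(\lig)$-finiteness of $\varphi$.
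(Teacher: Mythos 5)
There is a genuine gap in the argument, and it sits exactly where you flag your ``main obstacle.'' Your plan is to embed each $V_{\chi}$ into the space of automorphic forms decomposed by cuspidal supports (Langlands, Franke) and then argue that only supports compatible with the Satake data $\{\pi_{\chi,v}\}_{v\in S}$ intervene. But that decomposition is a decomposition of the space of \emph{automorphic forms}, and $Z(\lig)$-finiteness is part of the definition of that space. So invoking it here is circular: you would need to know $\varphi$ is $Z(\lig)$-finite before you can place it in the cuspidal-support decomposition, and $Z(\lig)$-finiteness is precisely what you are trying to establish. Relatedly, the reduction via strong multiplicity one to a single candidate A-parameter $\psi_{\chi}$ is unjustified at this stage: since $\varphi$ is not known to be square-integrable, its unramified Hecke data at $v\in S$ need not arise from any elliptic A-parameter in the sense of Theorem \ref{thm:The-discrete-spectrum}, so ``$\{\pi_{\chi,v}\}_{v\in S}$ arises from $\psi_{\chi}$'' is not something you can assume.

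The paper circumvents this by never appealing to a global decomposition of non-$Z(\lig)$-finite functions. It inducts on the number $N$ of standard parabolics $Q$ with $\varphi_{Q}\neq0$. Choosing $P$ minimal among these makes $\varphi_{P}$ cuspidal on $M(F)U(\aff)\backslash G(\aff)$, and for cuspidal functions the problem is tractable: by \cite[\S I.3.2]{MR1361168} a $\liz_{v}$-finite cuspidal function decomposes into pieces of the form $Q_{i}\lambda_{i}f_{i}$ where $\lambda_{i}f_{i}$ has uniform moderate growth (via Averbuch) and hence expands along the genuine cuspidal spectrum of $M$ (Lemma \ref{lem:Let--be-1}). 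The Levi cuspidal constituents $\mu_{i}$ on the classical factor \emph{do} have elliptic A-parameters, the $\mathcal{H}_{S}$-finiteness forces the associated isobaric data into a finite set, and Proposition \ref{prop:Let--be-1} then yields a finite-codimension ideal $I$ of $Z(\lig)$ annihilating $\varphi_{P}$ (Proposition \ref{prop:Let--be}). Writing $Z(\lig)\varphi=\sum_{i}\cc x_{i}\varphi+\sum_{j}Z(\lig)y_{j}\varphi$ with $\{y_{j}\}$ generating $I$, each $y_{j}\varphi$ has strictly fewer nonvanishing constant terms, and induction closes the argument. This machinery for the cuspidal case, together with the constant-term induction, is the content your proposal is missing.
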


According to Averbuch (\cite{MR850116}), a $\cc$-valued, smooth,
and $K$-finite function on $G(F)\backslash G(\aff)$ is of uniform
moderate growth if it is $C_{c}^{\infty}(K_{v}\backslash G(F_{v})/K_{v})$-finite
for some finite place of $F$ such that the residue characteristic
of $F_{v}$ is sufficiently large. Therefore, we know that $\varphi$
in Theorem \ref{thm:Assume-that} is an automorphic form i.e. a $\cc$-valued,
smooth, $K$-finite, and $Z(\lig)$-finite function of uniform moderate
growth.

\subsection{$\protect\liz_{v}$-finite function on $M(F)U(\protect\aff)\backslash G(\protect\aff)$}

Let $v$ be a finite place of $F$ and denote by $\liz_{v}$ the Bernstein
center of $G(F_{v})$. Fix a prime element $\varpi_{v}$ of $F_{v}$.
Let $P=MU$ be a parabolic subgroup of $G$. Let $T\simeq\gll_{1}^{k}$
be the maximal split torus of the center of $M$. We denote by $\chi_{1},\dots,\chi_{k}$
the $\zz$-basis of ${\rm Rat}(M)$. Any $\lambda=\sum_{i}s_{i}\otimes\chi_{i}\in{\rm Rat}(M)\otimes_{\zz}\cc$
is regarded as a function on $M(F)\backslash M(\aff)$ by $\lambda(m)=\prod_{i}|\chi_{i}(m)|_{\aff}^{s_{i}}$.
Denote by $\rho_{P}\in{\rm Rat}(M)\otimes_{\zz}\frac{1}{2}\zz$ the
square root of the modulus character of $P(\aff)$. Denote by $A_{M}^{v}$
the image of $(\varpi_{v}^{\zz})^{k}\subset(\aff^{\times})^{k}$ in
$T(\aff)$. Then we can find the following fact in \cite[\S I.3.2]{MR1361168}.
\begin{lem}
\label{lem:Let--be}Let $f$ be a $\cc$-valued, smooth, $K$-finite
and $\liz_{v}$-finite function on $M(F)U(\aff)\backslash G(\aff)$.
Then, we have

\[
f=\sum_{i=1}^{n}Q_{i}\lambda_{i}f_{i},
\]
where
\begin{itemize}
\item $Q_{i}$ is a polynomial in $\cc[X_{1},\dots,X_{k}]$ $(i=1,\dots,n)$
and it is regarded as a function on $G(\aff)$ by 
\[
Q_{i}(g)=Q_{i}(\log|\chi_{1}(m)|_{\aff},\dots,\log|\chi_{k}(m)|_{\aff})
\]
 for $g=muk$ $(m\in M(\aff),u\in U(\aff),k\in K)$,
\item $\lambda_{i}\in{\rm Rat}(M)\otimes_{\zz}\cc$ $(i=1,\dots,n)$ and
it is regarded as a function on $G(\aff)$ by
\[
\chi_{i}(g)=\chi_{i}(m)
\]
 for $g=muk$ $(m\in M(\aff),u\in U(\aff),k\in K)$, and
\item $f_{i}$ is a function on $A_{M}^{v}M(F)U(\aff)\backslash G(\aff)$
such that $z_{i}f=\lambda_{i}f_{i}$ for some element $z_{i}$ of
the Bernstein center of $A_{M}^{v}$ ($z_{i}$ acts $f_{i}$ by left
translation).
\end{itemize}
\end{lem}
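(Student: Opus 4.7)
The lemma is essentially \cite[\S I.3.2]{MR1361168} transcribed into the notation of the paper, so the plan is to follow Moeglin--Waldspurger's argument. The key technical point is the interplay between the Bernstein center $\liz_v$, acting on $f$ via right convolution at $v$, and the left-translation action of $A_M^v$, which is well-defined on $M(F)U(\aff)\backslash G(\aff)$ because $A_M^v$ lies in the center of $M$.

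First I would show that the $\cc$-span $V_f$ of the left $A_M^v$-translates of $f$ is finite-dimensional. Since $A_M^v$ is central in $M(F_v)$, its left-translation action commutes with the right $G(F_v)$-action, so $V_f$ inherits both $K$-finiteness and $\liz_v$-finiteness from $f$. Via the Bernstein decomposition and the Satake isomorphism, elements of $\liz_v$ control the cuspidal support of $f$ on the unramified side and in particular pin down how $A_M^v$ acts through left translation; combined with $K$-finiteness, this forces $V_f$ to be finite-dimensional.

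Since $A_M^v\simeq\zz^k$ acts by commuting operators on the finite-dimensional space $V_f$, it admits a simultaneous generalized eigenspace decomposition. The generalized eigencharacters are of the form $a\mapsto\prod_j|\chi_j(a)|_\aff^{s_j}$ for some $s_j\in\cc$, hence correspond to elements $\lambda_i\in\mathrm{Rat}(M)\otimes_\zz\cc$. For each $i$, one can choose a suitable element $z_i$ of the group algebra $\cc[A_M^v]$, which is the Bernstein center of $A_M^v$, such that $z_i f$ equals $\lambda_i$ times an $A_M^v$-invariant function $f_i$.

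Finally, on the generalized $\lambda_i$-eigenspace each generator $T_j$ of $A_M^v$ acts as $\lambda_i(T_j)\cdot\mathrm{id}+N_j$ with commuting nilpotents $N_j$, and the standard Jordan analysis produces a decomposition of the $\lambda_i$-component as $\lambda_i\sum_\alpha Q_{i,\alpha}\,f_{i,\alpha}$, with $Q_{i,\alpha}$ polynomial in $\log|\chi_1(m)|_\aff,\dots,\log|\chi_k(m)|_\aff$ (the natural log-coordinates on the free abelian group $A_M^v$) and $f_{i,\alpha}$ strictly $A_M^v$-invariant. Assembling these pieces over all $i$ yields the asserted expression $f=\sum_i Q_i\lambda_i f_i$. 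The main obstacle is the first step: establishing the finite-dimensionality of $V_f$ from $\liz_v$-finiteness is the technical heart of the argument and rests on Bernstein's structure theorem for $\liz_v$.
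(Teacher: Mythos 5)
The paper gives no proof of this lemma; the preceding sentence in the text reads ``Then we can find the following fact in \cite[\S I.3.2]{MR1361168},'' so the author is simply citing Moeglin--Waldspurger. Your proposal is therefore an attempt to reconstruct the textbook argument rather than something to compare against a proof in the paper itself.

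Your steps two and three are fine: once the span $V_f$ of left $A_M^v$-translates of $f$ is known to be finite-dimensional, decomposing $V_f$ into simultaneous generalized eigenspaces for the commuting lattice $A_M^v\simeq\zz^k$, identifying each generalized eigencharacter with an element of $\mathrm{Rat}(M)\otimes_\zz\cc$ via $a\mapsto\prod_j|\chi_j(a)|_\aff^{s_j}$, and running a Jordan-block analysis to extract the polynomial factors $Q_i$ in the log-coordinates is exactly the right bookkeeping. The choice of $z_i\in\cc[A_M^v]$ with $z_if=\lambda_i f_i$ and $f_i$ genuinely $A_M^v$-invariant is also the standard move.

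The gap is in step one, and you name it yourself: you assert that $\liz_v$-finiteness plus $K$-finiteness forces $V_f$ to be finite-dimensional, but the mechanism you offer does not actually deliver this. Left translation by $A_M^v$ commutes with right translation by $G(\aff)$, so every $L_a f$ is annihilated by the same finite-codimension ideal of $\liz_v$ and fixed by the same $K_v^0$; that gives you uniform bounds on each translate individually, not finiteness of the span. Invoking the Satake isomorphism does not help here because $f$ is only $K$-finite, not spherical, so the spherical Hecke algebra is the wrong tool. What actually connects the right $\liz_v$-action to the left $A_M^v$-action is the compatibility of the Bernstein center with the Jacquet functor (second adjointness): the right $G(F_v)$-module generated by $f$ is finitely generated and $\liz_v$-finite, hence has finite length, hence a finitely generated Jacquet module along $P$ on which the center of $M(F_v)$ --- and in particular $A_M^v$ --- acts locally finitely; and the restriction $m\mapsto f(mk)$ factors through this Jacquet module. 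This chain, or something equivalent to it, is the content of \cite[\S I.3.2]{MR1361168}, and without making it explicit your argument stops at a promissory note. So the outline is sound but the technical heart, which you correctly flag, is not yet a proof.
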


Note that $F^{\times}\varpi_{v}^{\zz}\backslash\aff^{\times}$ is
of finite volume. Thus, $A_{M}^{v}M(F)\backslash M(\aff)$ is of finite
volume. We decompose the cuspidal spectrum $L_{{\rm cusp}}^{2}(A_{M}^{v}M(F)\backslash M(\aff))$
as follows:

\[
L_{{\rm cusp}}^{2}(A_{M}^{v}M(F)\backslash M(\aff))=\hat{\bigoplus_{i}}\sigma_{i},
\]
where $\sigma_{i}\ (i=1,2\dots)$ is an irreducible unitary cuspidal
automorphic representation of $M(\aff)$ on which $A_{M}^{v}$ acts
trivially. Then we have the following.
\begin{lem}
\label{lem:Let--be-1}Let $f$ be a $\cc$-valued, smooth, $K$-finite
and cuspidal function on $M(F)U(\aff)\backslash G(\aff)$ of uniform
moderate growth. Then, we have

\[
f=\sum_{i=1}^{\infty}f_{i},\ f_{i}\in{\rm Ind}_{P(\aff)}^{G(\aff)}\rho_{P}^{-1}\sigma_{i},
\]
where ${\rm Ind}$ is normalized parabolic induction.
\end{lem}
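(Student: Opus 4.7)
The plan is to reduce the statement, via the Iwasawa decomposition, to the spectral decomposition $L^{2}_{\mathrm{cusp}}(A_{M}^{v} M(F)\backslash M(\aff)) = \hat{\bigoplus}_{i}\sigma_{i}$ recalled above. Observe first that every element of $\mathrm{Ind}_{P}^{G}\rho_{P}^{-1}\sigma_{i}$ is automatically $A_{M}^{v}$-invariant on the left (since $\sigma_{i}$ is trivial on $A_{M}^{v}$), so the desired identity $f=\sum_{i}f_{i}$ forces $f$ itself to be $A_{M}^{v}$-invariant; we therefore work under this tacit hypothesis, which in the intended application of this lemma is secured by first invoking Lemma \ref{lem:Let--be} to isolate a single $A_{M}^{v}$-isotypic component.

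Using $G(\aff) = M(\aff)U(\aff)K$, for each $k\in K$ I would form the \emph{$M$-section}
\[
\phi_{k}(m) := \rho_{P}(m)^{-1} f(mk),
\]
which by the hypotheses on $f$ is a smooth, $K\cap M(\aff)$-finite, cuspidal function on $A_{M}^{v}M(F)\backslash M(\aff)$ of uniform moderate growth; the family $\{\phi_{k}\}_{k\in K}$ spans a finite-dimensional space by $K$-finiteness of $f$. Invoking the classical fact that a smooth cuspidal function of moderate growth on $M(F)\backslash M(\aff)$ is rapidly decreasing on Siegel sets, and using the finite volume of $A_{M}^{v}M(F)\backslash M(\aff)$, each $\phi_{k}$ lies in $L^{2}_{\mathrm{cusp}}(A_{M}^{v}M(F)\backslash M(\aff))$. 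Applying the spectral decomposition then yields $\phi_{k} = \sum_{i}\phi_{i,k}$ with $\phi_{i,k}\in\sigma_{i}$. Because the orthogonal projection onto the $\sigma_{i}$-isotypic subspace is an $M(\aff)$-module map, the compatibility $\phi_{i,k_{0}^{-1}k}(mk_{0}) = \phi_{i,k}(m)$ for $k_{0}\in M(\aff)\cap K$ needed for Iwasawa well-definedness is automatic, and the $\{\phi_{i,k}\}_{k\in K}$ assemble into a function $f_{i}\in\mathrm{Ind}_{P}^{G}\rho_{P}^{-1}\sigma_{i}$ defined by $f_{i}(mk) := \rho_{P}(m)\phi_{i,k}(m)$.

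The main obstacle I expect is upgrading the a priori $L^{2}$-identity $\sum_{i}f_{i}=f$ to the pointwise identity asserted by the statement. For this I would exploit $K$-finiteness of $f$ together with smoothness: the $K$-span of $f$ is finite-dimensional, so it suffices to verify convergence on a single $K$-type, on which the spectral projections preserve the (finite-dimensional) $K\cap M(\aff)$-isotypic subspace and output vectors of a fixed infinitesimal character (each $\sigma_{i}$ being irreducible). Standard Sobolev and elliptic-regularity estimates applied to the smooth $K$-finite vector $f$ then upgrade $L^{2}$-convergence to uniform convergence on compact subsets of $G(\aff)$, yielding the required identity.
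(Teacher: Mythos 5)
Your proof is correct and follows essentially the same route the paper takes: the paper's terse proof simply invokes rapid decay of cuspidal functions of uniform moderate growth (\cite[\S I.2.12]{MR1361168}) together with the discrete decomposition of $L^{2}_{\mathrm{cusp}}(A_{M}^{v}M(F)\backslash M(\aff))$, which are precisely the two ingredients you make explicit via Iwasawa $M$-sections. Your remark that the statement tacitly requires $f$ to be left $A_{M}^{v}$-invariant (else the $\sigma_{i}$, all trivial on $A_{M}^{v}$, could not span) is a genuine and useful clarification the paper leaves implicit; in the application inside Proposition \ref{prop:Let--be} that invariance is indeed supplied by first applying Lemma \ref{lem:Let--be}.
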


\begin{proof}
Since $f$ is of uniformly moderate growth, $f$ is rapidly decreasing
(see \cite[\S I.2.12]{MR1361168}). Thus this lemma immediately follows
from the decomposition of $L_{{\rm cusp}}^{2}(A_{M}^{v}M(F)\backslash M(\aff))$.
\end{proof}
Note that we have not used the fact that $G$ is a classical group
yet.

\subsection{$\otimes'_{v\in S}C_{c}^{\infty}(K_{v}\backslash G(F_{v})/K_{v})$-finite
function on $M(F)U(\protect\aff)\backslash G(\protect\aff)$}
\begin{prop}
\label{prop:Let--be}Let $f$ be a $\cc$-valued, smooth, $K$-finite
and cuspidal function on $M(F)U(\aff)\backslash G(\aff)$. Assume
that $f$ is $\otimes'_{v\in S}C_{c}^{\infty}(K_{v}\backslash G(F_{v})/K_{v})$-finite
for a cofinite subset $S$ of the set of finite places of $F$. Then,
$f$ is $Z(\lig)$-finite.
\end{prop}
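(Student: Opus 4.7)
The plan is to first reduce $Z(\lig)$-finiteness to a finiteness statement about the archimedean infinitesimal characters appearing in a spectral expansion of $f$ along $P$, and then to use Hecke-rigidity for automorphic representations of the Levi $M$ to control that set.

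First, I would note that the spherical Hecke-finiteness of $f$ at any $v\in S$ (with $K_{v}$ hyperspecial, which holds at almost every place) implies $\liz_{v}$-finiteness: since $f$ is $K$-finite, the $K_{v}$-representation it generates has finitely many irreducible constituents, and on each of them the Bernstein center acts compatibly with the spherical Hecke algebra (via the Satake isomorphism on spherical pieces and block decomposition on the others), so spherical Hecke-finiteness forces $\liz_{v}$-finiteness. With this in hand, Lemma \ref{lem:Let--be} applied at such a $v$ produces a \emph{finite} decomposition
\[
f=\sum_{i=1}^{n}Q_{i}\lambda_{i}f_{i},
\]
in which $Q_{i}\lambda_{i}$ is a polynomial-times-character on $G(\aff)$ (visibly $Z(\lig)$-finite), and each $f_{i}$ is a smooth, $K$-finite, cuspidal function on $A_{M}^{v}M(F)U(\aff)\backslash G(\aff)$. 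It therefore suffices to show each $f_{i}$ is $Z(\lig)$-finite.

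Since $A_{M}^{v}M(F)\backslash M(\aff)$ has finite volume and $f_{i}$ is cuspidal and $K$-finite, the restriction $m\mapsto f_{i}(mk)$ is rapidly decreasing and hence lies in $L_{\mathrm{cusp}}^{2}(A_{M}^{v}M(F)\backslash M(\aff))$ for every $k\in K$. Spectral decomposition along $M$ then yields $f_{i}=\sum_{\sigma}f_{i,\sigma}$, where $\sigma=\tau_{1}\boxtimes\dots\boxtimes\tau_{k}\boxtimes\mu$ runs over irreducible cuspidal automorphic representations of $M(\aff)\simeq\gll_{r_{1}}(\aff)\times\dots\times\gll_{r_{k}}(\aff)\times G_{H,r_{0}}(\aff)$ trivial on $A_{M}^{v}$, and $f_{i,\sigma}$ lies in the $\sigma$-isotypic component. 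The Hecke-finiteness of $f_{i}$ at the cofinite set $S\setminus\{v\}$ forces only finitely many joint Satake-parameter tuples to appear among the $\sigma$ with $f_{i,\sigma}\neq 0$. Rigidity then pins these down up to archimedean infinitesimal character: strong multiplicity one for each $\gll_{r_{j}}$ uniquely determines each $\tau_{j}$, and strong multiplicity one for $\gll_{N}$ applied to the isobaric $\phi_{\psi_{\mu}}$ forces the A-parameter $\psi_{\mu}$ of $\mu$ (which exists by Theorem \ref{thm:The-discrete-spectrum}) to lie in a finite set.

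By Proposition \ref{prop:Let--be-1} applied to the classical factor $G_{H,r_{0}}$, each $L_{\psi_{\mu}}^{2}$ decomposes into finitely many archimedean $Z$-eigenspaces, so only finitely many archimedean infinitesimal characters occur among the surviving $\mu$, hence among the contributing $\sigma$. Normalized parabolic induction (with its $\rho_{P}$-shift) then produces only finitely many archimedean infinitesimal characters for the $f_{i,\sigma}$, so $f_{i}$, and therefore $f$, is $Z(\lig)$-finite. The main obstacle is the rigidity step: one must argue carefully that the Satake data of $\sigma$ at a cofinite set of finite places determines $\sigma$ up to a finite set of archimedean infinitesimal characters. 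The $\gll$-factors are handled by Jacquet--Shalika strong multiplicity one, while the classical factor requires passing through $\phi_{\psi_{\mu}}$ and invoking both Arthur's endoscopic classification (Theorem \ref{thm:The-discrete-spectrum}) and the archimedean-packet finiteness of Proposition \ref{prop:Let--be-1}.
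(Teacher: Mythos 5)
Your argument mirrors the paper's proof step for step: reduce via Lemma \ref{lem:Let--be} to cuspidal functions on $A_{M}^{v}M(F)U(\aff)\backslash G(\aff)$, decompose these into cuspidal representations of $M$ via Lemma \ref{lem:Let--be-1}, use Hecke-finiteness together with strong multiplicity one (for the $\gll$ factors and, through $\phi_{\psi_\mu}$, for the classical factor) to pin down finitely many cuspidal data, and finish with the archimedean finiteness from Proposition \ref{prop:Let--be-1}. The one spot to tighten is the claim that cuspidality plus $K$-finiteness already makes $m\mapsto f_i(mk)$ rapidly decreasing; you also need uniform moderate growth, which the paper obtains from Averbuch \cite{MR850116} using the Hecke-finiteness hypothesis before invoking Lemma \ref{lem:Let--be-1}.
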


\begin{proof}
Assume that $v$ is in $S$ and describe
\[
f=\sum_{i=1}^{n}Q_{i}\lambda_{i}f_{i}
\]
as in Lemma \ref{lem:Let--be}. Note that $\lambda_{i}f_{i}=z_{i}f$
is a $\cc$-valued, smooth, $K$-finite, $\otimes'_{v\in S}C_{c}^{\infty}(K_{v}\backslash G(F_{v})/K_{v})$-finite,
and cuspidal function on $M(F)U(\aff)\backslash G(\aff)$ of uniform
moderate growth (by \cite{MR850116}). To prove this proposition,
It is sufficient to find an ideal $I_{i}$ of $Z(\lig)$ for each
$i=1,\dots,n$ such that $I_{i}\lambda_{i}f_{i}=0$ and $Z(\lig)/I_{i}$
is finite dimensional over $\cc$. If we can find them, $I=I_{1}^{M_{1}}\dots I_{n}^{M_{n}}$
satisfies that $Z(\lig)/I$ is finite dimensional over $\cc$ and
$If=0$ if $M_{i}\in\zz_{>0}$ are sufficiently large.

Put $f_{i}=\varphi$ and $\lambda_{i}=\lambda$ for short. For simplicity
we assume that $P$ is a standard parabolic subgroup. Then, by Lemma
\ref{lem:Let--be-1}, we have
\[
\lambda\varphi=\sum_{i=1}^{\infty}\varphi_{i},\ \varphi_{i}\in\tau_{i,1}\times\dots\times\tau_{i,k}\rtimes\mu_{i},
\]
where $\lambda^{-1}\rho_{P}(\tau_{i,1}\boxtimes\dots\boxtimes\tau_{i,k}\boxtimes\mu_{i})\ (i=1,2\dots)$
is an irreducible unitary cuspidal automorphic representation of $M(\aff)$
on which $A_{M}^{v}$ acts trivial. Denote the elliptic A-parameter
of $\mu_{i}$ by $\psi_{i}$. Then, since $\lambda\varphi$ is $\otimes'_{v\in S}C_{c}^{\infty}(K_{v}\backslash G(F_{v})/K_{v})$-finite,
there is a finite set of isobaric representation $\Phi$ such that
if $\varphi_{i}\neq0$, then $\phi_{\psi_{i}}\boxplus_{j=1}^{k}(\tau_{i,j}\boxplus\tau_{i,j}^{\vee})\in\Phi$,
where $\tau_{i,j}^{\vee}$ is the dual of $\tau_{i,j}$. Thus, there
is an integer $k>0$ such that if $\varphi_{i}\neq0$, then $\tau_{i,j}=\tau_{i',j}\ (j=1,\dots,k),\ \psi_{i}=\psi_{i'}$
for some $i'\in\{1,\dots,k\}$. Therefore, by Proportion \ref{prop:Let--be-1},
\[
\lambda\varphi=\sum_{i=1}^{\infty}\varphi_{i}=\sum_{i'=1}^{k}\sum_{i=i'}\varphi_{i}
\]
is $Z(\lig)$-finite, or in other words, there is an ideal $I_{i}$
of $Z(\lig)$ what we want.
\end{proof}

\subsection{Proof of Theorem \ref{thm:Any--valued,-smooth,}}

We are now ready to prove Theorem \ref{thm:Any--valued,-smooth,}. 
\begin{proof}[Proof of Theorem \ref{thm:Any--valued,-smooth,}]
Let $N$ be the number of standard parabolic subgroups $Q$ such
that the constant term $\varphi_{Q}$ of $\varphi$ with respect to
$Q$ is nonzero. Let $P$ be a standard parabolic subgroup such that
$\varphi_{P}\neq0$ and $\varphi_{Q}=0$ for any $Q\subset P$. Since
$\varphi_{P}$ is cuspidal, we can find an ideal $I$ of $Z(\lig)$
such that $I\varphi_{P}=0$ and the $\cc$-algebra $Z(\lig)/I$ is
a finite dimensional $\cc$-vector space by Proposition \ref{prop:Let--be}.
Then we have
\[
Z(\lig)\varphi=\sum_{i}\cc x_{i}\varphi+\sum_{j}Z(\lig)y_{j}\varphi,
\]
where $\{x_{i}\}_{i}\subset Z(\lig)$ such that $\{x_{i}+I\}_{i}$
is a basis of $Z(\lig)/I$ and $\{y_{j}\}_{j}\subset Z(\lig)$ is
a (finite) set of generator of $I$. Then, for any $j$, the number
of standard parabolic subgroups $Q$ such that $(y_{j}\varphi)_{Q}\neq0$
is smaller than $N$ since $(y_{j}\varphi)_{P}\in I\varphi_{P}=0$.
Thus, by induction, we conclude that $Z(\lig)f$ is finite-dimensional.
\end{proof}

\section{Square-integrability\label{sec:Square-integrability}}

In this section, we prove Theorem \ref{thm:Assume-that}. We assume
that $G=G_{H,r}$ for some $H$ and $r>0$ (if $r=0$, there is nothing
to show).

\subsection{Square-integrability criterion}

The important fact is that $\varphi$ in Theorem \ref{thm:Assume-that}
is an automorphic form by Theorem \ref{thm:Any--valued,-smooth,}.
Therefore, we can use Langlands' square-integrability criterion. Let
us recall it.
\begin{prop}[{\cite[\S 1.4.11.]{MR1361168}}]
\label{prop:Let--be-2}Let $\varphi$ be an automorphic form and
$\pi$ the $G(\aff)$-representation generated by $\varphi$. Consider
the following space
\[
{\rm Hom}_{G(\aff)}(\pi,\tau_{1}'|\det|_{\aff}^{R_{1}}\times\dots\times\tau_{k}'|\det|_{\aff}^{R_{k}}\rtimes\mu),
\]
where
\begin{itemize}
\item $\tau_{1}'\boxtimes\dots\boxtimes\tau_{k}'\boxtimes\mu$ is an irreducible
unitary cuspidal representation of $M_{(r_{0};r_{1},\dots,r_{k})}(\aff)$
and
\item $R_{i}\in\rr$.
\end{itemize}
If $\varphi$ is not square-integrable, then the above space is nonzero
for some $\{r_{i}\}_{i=0}^{k},\{\tau_{i}'\}_{i=1}^{k},\mu,$ and $\{R_{i}\}_{i=1}^{k}$
such that
\[
\sum_{i=1}^{l}r_{i}R_{i}\geq0
\]
for some $l\in\{1,\dots,k\}$.
\end{prop}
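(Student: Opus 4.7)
The strategy is to derive Langlands' criterion by analyzing the cuspidal exponents of $\varphi$ along standard parabolic subgroups and matching them to elements of the Hom space via Frobenius reciprocity. The plan has three steps: (i) reduce square-integrability to a growth condition on constant terms using reduction theory; (ii) expand each constant term in cuspidal exponents and convert a nonzero exponent into a nonzero element of the asserted Hom space; (iii) translate the strict ``negative chamber'' condition on exponents into the stated inequalities $\sum_{i=1}^{l} r_i R_i < 0$.

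For step (i), integration over a Siegel set shows that $\int_{G(F)\backslash G(\aff)} |\varphi|^2$ is bounded, up to controlled error from truncation, by a sum over standard parabolics $P=MU$ of integrals $\int |\varphi_P|^2 \delta_P^{-1}$ over the positive chamber of the split center $A_M$. Standard Maass--Selberg estimates imply that $\varphi$ is square-integrable iff each constant term $\varphi_P$ decays strictly inside the negative Weyl chamber with respect to the simple roots for $P$. For step (ii), $\varphi_P$ is itself an automorphic form on $M(F)\backslash M(\aff)$ (times $K$), so Lemma \ref{lem:Let--be} applied to $M$ gives a finite expansion
\[
\varphi_P(mg) = \sum_{\lambda} Q_\lambda(H(m))\,\lambda(m)\,\varphi_{P,\lambda}(g),
\]
where $\lambda = \sum_i s_i \otimes \chi_i \in \mathrm{Rat}(M)\otimes\cc$ runs through a finite set of exponents and each $\varphi_{P,\lambda}$ is cuspidal on $M$. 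Applying Lemma \ref{lem:Let--be-1} to $\varphi_{P,\lambda}$ decomposes it into components in $\mathrm{Ind}_{P(\aff)}^{G(\aff)} \rho_P^{-1}\sigma$ for irreducible unitary cuspidal $\sigma$ of $M(\aff)$. Writing $R_i = \mathrm{Re}(s_i)$ and absorbing $\rho_P$ and the imaginary parts $i\,\mathrm{Im}(s_i)$ into $\sigma = \tau_1'\boxtimes\cdots\boxtimes\tau_k'\boxtimes\mu$, a nonzero component gives by Frobenius reciprocity (constant term is left adjoint to normalized parabolic induction) a nonzero element of
\[
\mathrm{Hom}_{G(\aff)}\bigl(\pi,\ \tau_1'|\det|_\aff^{R_1}\times\cdots\times\tau_k'|\det|_\aff^{R_k}\rtimes\mu\bigr).
\]

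For step (iii), one identifies the negative chamber in the coordinates $(R_1,\ldots,R_k)$. For a standard parabolic $P_{\mathbf{r}}$ of $G_{H,r}$, the simple roots of $G$ restricted to $A_M$ (using the basis $\chi_1,\ldots,\chi_k$) are $\chi_i - \chi_{i+1}$ for $i<k$ together with one root involving only $\chi_k$ and the anisotropic block; a direct calculation on the matrix realization shows that the dual basis (fundamental weights) pairs with $(R_1,\ldots,R_k)$ precisely through the partial sums $\sum_{i=1}^l r_i R_i$ for $l=1,\ldots,k$. Strict negativity of $\mathrm{Re}(\lambda)$ in the chamber thus becomes $\sum_{i=1}^l r_i R_i < 0$ for every $l$, and failure of square-integrability forces some exponent to satisfy $\sum_{i=1}^l r_i R_i \geq 0$ for some $l$, producing the Hom element required. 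The main obstacle is the careful bookkeeping of the $\rho_P$ shift and the precise matching of the negative chamber with the stated inequalities; one must also justify the Frobenius reciprocity step in the presence of the polynomials $Q_\lambda$, which is handled by differentiating along $A_M$ to reduce to purely exponential terms, and verify that the $\tau_j'$ obtained really are unitary (a consequence of $\varphi_{P,\lambda}$ being an automorphic form in the classical sense, hence of uniform moderate growth modulo $A_M$).
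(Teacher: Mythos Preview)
The paper does not supply its own proof of this proposition: it is quoted directly from Moeglin--Waldspurger \cite[\S I.4.11]{MR1361168}, with only the remark that the formulation here is ``slightly weaker than the original version, but easier to handle in terms of representation theory.'' So there is no in-paper argument to compare against.

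Your sketch is a faithful outline of the standard proof in that reference. The three steps---reduction-theoretic control of $L^{2}$ by constant terms, expansion of $\varphi_{P}$ into cuspidal exponents, and identification of the open negative cone---are exactly the ingredients of \cite[\S\S I.2--I.4]{MR1361168}. Your computation of the fundamental coweights $\varpi_{l}^{\vee}=e_{1}^{\vee}+\dots+e_{l}^{\vee}$ dual to the simple roots $e_{i}-e_{i+1}$ ($i<k$) and $e_{k}$ of $A_{M}$ in $U$ is correct, and since $|\det|^{R_{i}}$ on $\gll_{r_{i}}$ restricts to $|t_{i}|^{r_{i}R_{i}}$ on $A_{M}$, the pairing $\langle\text{Re}(\lambda),\varpi_{l}^{\vee}\rangle$ is indeed $\sum_{i=1}^{l}r_{i}R_{i}$, giving the stated inequalities.

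Two small points of bookkeeping. First, invoking Lemma~\ref{lem:Let--be} for the exponent expansion is slightly off target: that lemma assumes $\liz_{v}$-finiteness at a finite place, whereas for an automorphic form one uses $Z(\lig)$-finiteness directly (this is \cite[\S I.3.2]{MR1361168}, the same source). Second, the passage from a nonzero cuspidal component of $\varphi_{P}$ to a nonzero element of the Hom space is not Frobenius reciprocity in the abstract sense but the concrete map $\varphi'\mapsto(\text{cuspidal }\sigma\text{-component of }(\varphi')_{P})$, which is $G(\aff)$-equivariant into the induced space; the polynomial factors $Q_{\lambda}$ are handled already in Moeglin--Waldspurger's definition of cuspidal exponent (one takes the leading term), so no separate differentiation argument is needed.
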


This is slightly weaker than the original version, but easier to handle
in terms of representation theory.

\subsection{Real number $d(-)$}

Next we introduce $d(-)$. Let $v$ be a finite place of $F$. For
any irreducible unramified representation $\sigma$ of $G(F_{v})$,
consider the following decomposition
\[
[J_{P_{(r-1;1)}}\sigma]=\bigoplus_{i=1}^{k}\abs_{F_{v}}^{s_{i}}\boxtimes\Sigma_{i},
\]
where
\begin{itemize}
\item $J_{P_{(r-1;,1)}}\sigma$ is the Jacquet module of $\sigma$ with
respect to $P_{(r-1;1)}(F_{v})$ and $[J_{P_{(r-1;1)}}\sigma]$ is
the semisimplification of $J_{P_{(r-1;1)}}\sigma$,
\item $s_{i}\in\cc$, and
\item $\Sigma_{i}$ is some smooth representation of $G_{H,r-1}(F_{v})$.
\end{itemize}
Then, we define $d(\sigma):=\max\{{\rm Re}s_{i}\ |\ i=1,\dots,k\}$.

For local components of discrete automorphic representations, $d(-)$
has the following property.
\begin{prop}
\label{prop:Let--be-3}Let $\sigma=\otimes_{v}\sigma_{v}$ be an irreducible
discrete automorphic representation of $G(\aff)$. Let $v$ be a finite
place of $F$. Assume that $\sigma_{v}$ is unramified and $G$ is
split over $F_{v}$. Then we have $d(\sigma_{v})<1/2$.
\end{prop}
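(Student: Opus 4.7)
The plan is to use the endoscopic classification to reduce to a calculation of Satake parameters, which are then controlled by the known bound towards the Ramanujan conjecture for cuspidal representations of general linear groups.

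First, since $\sigma$ is an irreducible discrete automorphic representation, Theorem \ref{thm:The-discrete-spectrum} places $\sigma$ in $L^2_\psi(G(F)\backslash G(\aff))$ for some elliptic A-parameter $\psi=\boxplus_i \tau_i[N_i]$. As $\sigma_v$ is unramified and $G$ splits over $F_v$, we identify $\sigma_v\simeq \pi_{\psi_v}$, the unique unramified subquotient of the principal series whose Satake data are determined by $\phi_\psi$ at $v$. Writing the Satake parameters of $\tau_{i,v}$ as $\alpha_{i,1,v},\dots,\alpha_{i,n_i,v}$, the Satake parameters of $\pi_{\psi_v}$ form the multiset $\{\alpha_{i,j,v}\,q_v^{(N_i-2\ell-1)/2}\}_{i,j,\ell}$, together with their inverses.

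Second, I relate these Satake parameters to the exponents in $[J_{P_{(r-1;1)}}\sigma_v]$. The Jacquet module of the unramified principal series $\ind_{B(F_v)}^{G(F_v)}(\chi_v)$ along $P_{(r-1;1)}$ is computable by the Bernstein--Zelevinsky geometric lemma: its $\gll_1$-exponents are exactly $\pm\log_{q_v}\chi_{v,j}$, as $\chi_{v,j}$ ranges over the torus coordinates of $\chi_v$, i.e., over the Satake parameters above. Since $\pi_{\psi_v}$ is an irreducible subquotient of this principal series, the semisimplification $[J_{P_{(r-1;1)}}\sigma_v]$ is a subquotient of $J_{P_{(r-1;1)}}\ind_B^G(\chi_v)$, so every $s_i$ that appears is of the form $\pm\bigl(\mathrm{Re}(\log_{q_v}\alpha_{i',j',v})+(N_{i'}-2\ell'-1)/2\bigr)$ for some $(i',j',\ell')$.

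Third, I bound $\mathrm{Re}(\log_{q_v}\alpha_{i,j,v})$ using the Luo--Rudnick--Sarnak bound towards Ramanujan: for the unitary cuspidal automorphic representation $\tau_i$ of $\gll_{n_i}(\aff)$, the local Satake parameters satisfy $|\mathrm{Re}(\log_{q_v}\alpha_{i,j,v})|<1/2-1/(n_i^2+1)<1/2$.

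The main obstacle I expect is reconciling this bound with the shift $(N_i-2\ell-1)/2$ when some $N_i\ge 2$: naively, a maximally shifted exponent has real part close to $N_i/2>1/2$, so the mere containment of $[J_{P_{(r-1;1)}}\sigma_v]$ inside the principal-series Jacquet module is not sharp enough. The decisive step is to refine Step 2 by exploiting the Langlands-quotient structure of $\pi_{\psi_v}$: the maximally shifted composition factors must drop out of $J_{P_{(r-1;1)}}\sigma_v$, so that only $\gll_1$-exponents coming from a restricted subset of Weyl translates survive. Establishing this via the geometric lemma combined with the explicit description of the unramified subquotient in terms of its Langlands data is where I expect the real work to lie.
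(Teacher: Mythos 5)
You correctly identify the skeleton of the problem — realize $\sigma_v\simeq\pi_{\psi_v}$ via the endoscopic classification, compute the $\gll_1$-exponents in the Jacquet module from the unramified principal series, and invoke bounds toward Ramanujan for the cuspidal $\tau_i$ — and you also correctly locate the obstruction: the Arthur $\sll_2$ shifts $(N_i-2\ell-1)/2$ can push the naive exponent bound up to roughly $N_i/2$, far above $1/2$. But you then stop exactly at the decisive step, explicitly deferring it ("this is where I expect the real work to lie"). That step is not a formality; it is the entire content of the proposition. The geometric lemma alone gives you exponents for every Weyl translate of the inducing character, and determining which of those survive in the specific unramified subquotient $\pi_{\psi_v}$ is a genuinely hard question about the composition series of the principal series of a classical group. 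There is no soft argument showing that "the maximally shifted composition factors must drop out"; in general which factors survive depends delicately on the shape of the Langlands/Zelevinsky data.

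The paper resolves exactly this by citing Mui\'c's classification of unramified representations of split classical $p$-adic groups (\cite[Theorems 0-2, 0-5]{MR2767523}): every such representation, and in particular $\pi_{\psi_v}$, is the unramified subquotient of an induced representation of the form $|\det_{n_1}|^{t_1}_{F_v}\times\dots\times|\det_{n_k}|^{t_k}_{F_v}\rtimes\mu$, where $\mu$ is a \emph{strongly negative} unramified representation (so $d(\mu)<0$) and, crucially, one has $0\le\mathrm{Re}(t_i)<1/2$. This normal form packages the re-expression of the A-parameter's $\sll_2$ shifts together with the GL-side Ramanujan bound into a single structural statement from which $d(\pi_{\psi_v})<1/2$ follows by a short Jacquet-module computation (the paper points to \cite[Theorem 3.1]{MR2231392}). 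Without appealing to Mui\'c's theorem or reproving its content, your argument does not close; the LRS bound plus the geometric lemma on the full principal series cannot beat the spurious $N_i/2$ exponents. To complete your proposal you would need to supply the structural input that Mui\'c's result provides.
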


\begin{proof}
Since $\sigma_{v}$ corresponds to the $v$-th component of associated
isobaric representation for some elliptic A-arameter, $\sigma_{v}$
is realized as the unramified subquotient of the following induced
representation

\[
|\det{}_{n_{1}}|_{F_{v}}^{t_{1}}\times\dots\times|\det{}_{n_{k}}|_{F_{v}}^{t_{k}}\rtimes\mu
\]
for some $n_{i}\in\zz_{>0}$, $t_{i}\in\cc$ such that $0\leq{\rm Re}(t_{i})<1/2$
and ``strongly negative unramified representation'' $\mu$ (see
\cite[Theorem 0-2,0-5]{MR2767523}). Note that $d(\mu)<0$. Thus,
it is easy to check that $d(\sigma)<1/2$ (e.g. \cite[Theorem 3.1.]{MR2231392}).
\end{proof}
\begin{rem}
\begin{itemize}
\item If Ramanujan conjecture is true, then we have $d(\sigma_{v})\leq0$.
\item Even if we remove ``and $G$ is split over $F_{v}$'', this theorem
would still hold. However, since \cite[Theorem 0-2,0-5]{MR2767523}
does not cover such cases, we cannot remove it.
\end{itemize}
\end{rem}

\subsection{Proof of Theorem \ref{thm:Assume-that}}

We are now ready to prove Theorem \ref{thm:Assume-that}.
\begin{proof}[Proof of Theorem \ref{thm:Assume-that}]
Let 
\[
\pi\map\tau_{1}'|\det|_{\aff}^{R_{1}}\times\dots\times\tau_{k}'|\det|_{\aff}^{R_{k}}\rtimes\mu,
\]
be a nonzero $G(\aff)$-map, where $\{r_{i}\}_{i=0}^{k},\{\tau_{i}'\}_{i=1}^{k},\mu,$
and $\{R_{i}\}_{i=1}^{k}$ are as in Proposition \ref{prop:Let--be-2}.
Let $\psi'$ be the A-parameter of $\mu$. Then, there is a nonzero
map

\[
\pi_{v}\map\tau_{1,v}'|\det|_{F_{v}}^{R_{1}}\times\dots\times\tau_{k,v}'|\det|_{F_{v}}^{R_{k}}\rtimes\mu_{v}
\]
for any $v\in S$ and we have
\[
(\tau|\det|_{\aff}^{(n-1)/2}\boxplus\tau|\det|_{\aff}^{(n-3)/2}\boxplus\dots\boxplus\tau|\det|_{\aff}^{-(n-1)/2})\boxplus(\boxplus_{i=1}^{m}\tau_{i})=\boxplus_{j=1}^{k}(\tau_{j}'|\det|_{\aff}^{R_{i}}\boxplus\tau_{j}'^{\vee}|\det|_{\aff}^{-R_{i}})\boxplus\phi_{\psi'}.
\]
Therefore, we have 
\begin{itemize}
\item $\psi'=\tau[n-2k]\boxplus(\boxplus_{i=1}^{m}\tau_{i}[1])$, 
\item $\tau_{i}'=\tau$ for $i=1,\dots,k$, and 
\item $\{|R_{i}|\}_{i=1}^{k}=\{(n-1)/2,(n-3)/2,\dots.(n-2k+1)/2\}$. 
\end{itemize}
We show $R_{i}<0$ for any $i$.

Assume that $R_{1},R_{2},\dots,R_{t-1}<0,\ R_{t}\geq0$ for some $t.$
Then $R_{t}\geq1/2$ and $R_{t}-R_{i}\geq2$ for $i=1,2,\dots,t-1$.
Let $v$ be a place of $F$ such that $v\in S$ and $G$ is split
in $F_{v}$. Then, we have

\[
\tau_{1,v}'|\det|_{F_{v}}^{R_{1}}\times\dots\times\tau_{k,v}'|\det|_{F_{v}}^{R_{k}}\rtimes\mu_{v}\simeq\tau_{t,v}'|\det|_{F_{v}}^{R_{t}}\times\tau_{1,v}'|\det|_{F_{v}}^{R_{t-1}}\times\dots\times\tau_{t-1,v}'|\det|_{F_{v}}^{R_{t+1}}\times\tau_{t+1,v}'|\det|_{F_{v}}^{R_{k}}\times\dots\times\tau_{k,v}'|\det|_{F_{v}}^{R_{k}}\rtimes\mu_{v}.
\]
Thus we have $d(\pi_{v})\geq1/2$. However, this contradicts to Proposition
\ref{prop:Let--be-3}.

Therefore, by Proposition \ref{prop:Let--be-2}, $\varphi$ is square-integrable.
\end{proof}
\bibliographystyle{plain}
\bibliography{cloudbib}

\end{document}